\theoremstyle{plain} 
\newtheorem{theorem}{\indent\sc Theorem}[section]
\newtheorem{lemma}[theorem]{\indent\sc Lemma}
\theoremstyle{definition} 
\newtheorem{remark}[theorem]{\indent\sc Remark}
\begin{document}

\title[  A generalized Cahn-Hilliard equation]{ Global existence of a generalized Cahn-Hilliard equation with  biological applications} 


\author[N. Duan]{Ning Duan}
\author[X. Zhao]{Xiaopeng Zhao
} 
\subjclass[2010]{ 
Primary 35B65; Secondary 35K35; 35K55.
}
%
\keywords{ 
Generalized Cahn-Hilliard equation, regularity,  Campanato space, classical solution.
}

\address{N. Duan\endgraf
School of Science\endgraf
Jiangnan University\endgraf
Wuxi 214122,~~~
P. R. China
}
\email{dn@jiangnan.edu.cn}

\address{X. Zhao\endgraf
School of Science\endgraf
Jiangnan University\endgraf
Wuxi 214122,~~~
P. R. China\endgraf
and
\endgraf
Department of Mathematics\endgraf Southeast University\endgraf
Nanjing 210018,~~~P. R. China
}
\email{zhaoxiaopeng@jiangnan.edu.cn}

\begin{abstract}
In this paper,  on the basis of the Schauder type estimates and Campanato spaces, we prove the global existence of classical
solutions  for a generalized Cahn-Hilliard equation with biological applications.

\end{abstract}\maketitle
{\small\section{Introduction} \label{sect1}

In \cite{14}, Khain and Sander proposed a generalized Cahn-Hilliard equation
\begin{equation}
\label{1-00}\frac{\partial u}{\partial t}-\frac{\partial^2}{\partial x^2}\left[\ln(1-q)\frac{\partial^2u}{\partial x^2}+F'(u)\right]+\alpha u(u-1)=0.
\end{equation}
Equation (\ref{1-00}) is modelling cells which move, proliferate and interact via adhesion in wound healing and tumor growth. Here, $u$ is the local density of cells, $q$ is the adhesion parameter, $\alpha>0$ is the proliferation rate, $F$ is the local free energy. Moreover,
$$
q=1-\exp(-\frac J{k_BT}),
$$
where $J$ corresponds to the interatomic interaction, $k_B$ is the Boltzmann's constant and $T$ is the absolute temperature, assumed constant.
 Recently, for simplicity, Cherfils, Miranville and Zelik\cite{AM2} set all physical constants equal to $1$ and solved the problem in the higher space dimension (in two space dimensions, the equation models, e.g., the clustering of malignant brain tumor cells, see\cite{AM2,AM1}),
i.e., they studied asymptotic behavior the generalized Cahn-Hilliard equation
\begin{equation}
\label{1-0}\frac{\partial u}{\partial t}+\Delta^2u-\Delta f(u)+g(u)=0
\end{equation} endowed with Neumann boundary conditions. In addition, Zhao\cite{ZXP} studied the global solvability and dynamical behavior of solutions for the Cauchy problem of the  modified equation of (\ref{1-1}).

It is well-known that the principal part of many types of nonlinear diffusion equations which arise from mathematics and other branches of natural science (for example, physics, mechanics,material science, population ecology and so on) are  nonlinearity.
In the last three decades, more and more authors paid their attentions to the well-posedness of solutions for higher-order diffusion equations together with nonlinear principal parts, see for example \cite{Yin,Liu,Dai,GM} and so on.

In this paper, setting the nonlinear functions $f(u)=u^3$ and $g(u)=u^2$, we consider the following fourth order nonlinear parabolic equation
\begin{equation}
\label{1-1}
\partial_tu+D^2[a(u)D^2 u-u^3]+u^2=0,\quad(x,t)\in\Omega\times(0,T),
\end{equation}
where $\Omega=(0,1)$, $D=\frac{\partial}{\partial x}$, $a(u)$ is a nonlinear function. It is thus clear that the study on Eq.(\ref{1-1}) is  meaningful.
On the basis of physical consideration,
Eq.(\ref{1-1}) is supplemented by the following boundary condition
\begin{equation}
\label{1-2}
u(0,t)=u(1,t)=D^2u(0,t)=D^2u(1,t)=0,
\end{equation}
and the initial condition
\begin{equation}\label{1-3}
u(x,0)=u_0(x).
\end{equation}
\begin{remark}
In \cite{AM2}, the authors suppose that $f(s)=s^3-s$ and $g(s)=s^2-s$.  In order to simple the calculations, we delete the linear terms in $f(s)$ and $g(s)$. In fact, our main result also holds for the case $f(s)=s^3-s$ and $g(s)=s^2-s$.
\end{remark}

Our main purpose is to establish the global existence of classical solutions under much more general assumptions.
The main difficulties for treating the problem (\ref{1-1})-(\ref{1-3}) are caused by the nonlinearity of the principal part and the nonlinear term $u^2$. Due to the nonlinearity of the principal part, there are more difficulties in establishing
the global existence of classical solutions. Our method for investigating the regularity of solutions is based on uniform
Schauder type estimates for local in time solutions. Since the term $u^2$ is a polynomial of order $2$ on $u\in\mathbb{R}^1$, it is difficulty to deal with this term on the process of a prior estimates.  Employing the techniques in \cite{AM1}, we introduce the inverse operator $(-D^2)^{-1}$, which is a positive self-adjoint operator, to handle the nonlinear term $u^2$.
Our approach lies in the combination of the Schauder type estimates with some methods based on the
framework of Campanato spaces.

\begin{theorem}
\label{thm1.1}Suppose that
\begin{itemize}
\item $u_0(x)\in C^{4+\alpha}(\mathbb{R})$ for some $\alpha\in(0,1)$ and $\langle u_0\rangle=\int_{\Omega}u_0dx$ is bounded;
\item $a(u)$ if of class $C^{2+\alpha}(\mathbb{R})$ for some $\alpha\in(0,1)$,  $1<M_1\leq a(u)\leq M_2$ and $a'(u)u\geq0$, where $M_1$ and $M_2$ are two positive constants.
    \end{itemize} Then for any smooth initial value $u_0(x)$ with $u_0|_{x=0,1}=D^2u_0|_{x=0,1},
$  problem (\ref{1-1})-(\ref{1-3}) admits a unique classical solution.
\end{theorem}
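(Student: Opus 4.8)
The plan is to prove global existence by the standard three-step scheme: (i) establish local-in-time existence of a classical solution via a fixed-point argument, (ii) derive a priori estimates uniform on the maximal interval of existence, and (iii) bootstrap to Schauder regularity and continue the solution, concluding that it is global. Local existence follows from parabolic theory for the quasilinear fourth-order equation $\partial_t u + a(u)D^4u = \text{lower order}$; linearizing around a given function and applying $L^p$ or Schauder fixed-point arguments (the coefficient $a(u)$ is $C^{2+\alpha}$ and bounded below by $M_1>1$, so the operator is uniformly parabolic) gives a solution on some $[0,T_0)$ with $u\in C^{4+\alpha,1+\alpha/4}$. The real content is the a priori estimates.

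The first estimate is conservation/control of the mean: integrating \eqref{1-1} over $\Omega$ and using the boundary conditions \eqref{1-2} kills the $D^2[\cdots]$ term, leaving $\frac{d}{dt}\langle u\rangle = -\int_\Omega u^2\,dx \le 0$, so $\langle u\rangle$ stays bounded (it decreases from the bounded initial value; one also needs a lower bound, which should come from the $L^2$ bound below). Next I would test \eqref{1-1} against $u$ itself: the principal term gives $\int a(u)(D^2u)^2 + \int a'(u)(D^2u)^2 u\cdot(\text{something})$ — more precisely $\int D^2[a(u)D^2u]\,u = \int a(u)(D^2u)^2$ after integrating by parts twice and using the hypothesis $a'(u)u\ge 0$ to control the sign of the cross term; the cubic term $\int D^2(u^3)\,u = -\int D(u^3)Du = -3\int u^2(Du)^2 \le 0$ moves to a good side after integration by parts; and the troublesome $\int u^3\,dx$ from the $u^2$ term must be absorbed. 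This is where the inverse operator $(-D^2)^{-1}$ enters: testing instead against $(-D^2)^{-1}u$ (equivalently, working in the $H^{-1}$ framework as in the Cahn–Hilliard literature) converts $\int u^2\cdot(-D^2)^{-1}u$ into a manageable quantity and turns the biharmonic term into a coercive $\int(D u)^2$-type term, yielding an $H^{-1}$ energy estimate that, combined with interpolation and the mean bound, gives $u\in L^\infty(0,T;L^2)\cap L^2(0,T;H^2)$.

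From there I would iterate: test against $D^4u$ or $\partial_t u$ to upgrade to $L^\infty(0,T;H^2)\cap L^2(0,T;H^4)$, using the uniform parabolicity $a(u)\ge M_1>1$ and the now-available $L^\infty$ bound on $u$ (obtained from $H^2(\Omega)\hookrightarrow C(\overline\Omega)$ in one space dimension) to control all the nonlinear terms $a'(u)$, $a''(u)$, $u^2$, $u^3$ and their derivatives, which are then bounded functions of a bounded quantity. Once $u$ is bounded in $C^{0,\gamma}$ uniformly in $t$, the equation has coefficient $a(u)$ of class $C^{\gamma}$ in $(x,t)$, so interior-and-boundary Schauder estimates for linear fourth-order parabolic equations (the Campanato-space machinery referenced in the introduction) give a uniform bound on $\|u\|_{C^{4+\alpha,1+\alpha/4}}$ on $[0,T_0)$. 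This uniform Schauder bound prevents blow-up of any relevant norm as $t\to T_0$, so the local solution extends past $T_0$; since $T_0$ was arbitrary, the solution is global. Uniqueness follows from a Gronwall argument on the difference of two solutions, again testing against $(-D^2)^{-1}(u_1-u_2)$ to handle the quadratic nonlinearity.

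The main obstacle is the energy estimate for the quadratic reaction term $u^2$: unlike the classical Cahn–Hilliard case where the nonlinearity sits under a Laplacian, here $u^2$ appears undifferentiated and is not sign-definite, so naive $L^2$ testing produces an uncontrolled $\int u^3$. The device of pairing with $(-D^2)^{-1}$ and simultaneously exploiting the structural hypothesis $a'(u)u\ge 0$ (which makes the linearized principal part dissipative in the right norm) is what makes the a priori estimate close; getting the constants and the interpolation exponents to line up so that the cubic term is genuinely absorbed — rather than merely bounded by something that later blows up — is the delicate point, and it is presumably where the bulk of the paper's technical work lies.
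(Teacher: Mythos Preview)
Your broad architecture---local existence, a priori energy estimates using the $(-D^2)^{-1}$ pairing, then Schauder bootstrap---matches the paper, and you have correctly located the main difficulty in the undifferentiated $u^2$ term. But the specific energy ladder you propose diverges from the paper and contains a likely gap. The paper's sequence of test functions is $Nu=(-D^2)^{-1}u$, then $u$, then $Nu_t=(-D^2)^{-1}u_t$, then $-D^2u$; this yields only $u\in L^\infty(0,T;H^1)\cap L^2(0,T;H^3)$, not the $L^\infty(H^2)\cap L^2(H^4)$ you aim for. Your suggestion to test against $D^4u$ or $\partial_t u$ is where things would go wrong: the quasilinear principal part $D^2[a(u)D^2u]$ throws off cross terms such as $\int a'(u)\,Du\,D^3u\,D^4u$ or $\int a'(u)(D^2u)^2 u_t$ that cannot be absorbed with the information available at that stage. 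The paper deliberately stops the energy estimates at the $-D^2u$ level, and even there the absorption is delicate: the bound on $\int a'(u)|Du\,D^2u\,D^3u|$ produces a coefficient $1/M_1$ in front of $\int a(u)|D^3u|^2$, so the hypothesis $M_1>1$ (not merely $M_1>0$) is what makes the estimate close. You did not flag this, and it is the reason for the otherwise odd-looking assumption.

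The second difference is in the bootstrap. Rather than pushing energy estimates up to $H^2$ and invoking $H^2\hookrightarrow C$, the paper extracts $\|u\|_{L^\infty}$ and a spatial $C^{1/2}$ modulus directly from $\sup_t\|Du\|<\infty$ (one dimension), and then proves a $C^{1/8}$ time-H\"older estimate by integrating the equation over a small space--time box and using the $L^2_tH^3_x$ bound (Lemma~2.5). With $a(u(x,t))\in C^{1/2,1/8}$ in hand, the paper invokes a cited linear lemma (Lemma~2.6, from Liu) giving H\"older regularity of $Du$ for $\partial_t u+D[\tilde a(x,t)D^3u]=Df$; only then does it rewrite the equation with H\"older coefficients and apply classical fourth-order Schauder theory. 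A minor correction: the structural hypothesis $a'(u)u\ge0$ is used when testing against $Nu$, not against $u$---integrating $-\int a(u)u\,D^2u$ by parts produces $\int[a(u)+a'(u)u]|Du|^2$, and the sign condition makes this dominate $\int a(u)|Du|^2$.
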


\begin{remark}
\label{rem1.1}
Under the assumption of Theorem \ref{thm1.1}, Eq.(\ref{1-1}) can be rewritten, equivalently, as
\begin{equation}
\label{a-1}
\partial_tu+D[a(u)D^3u+a'(u)DuD^2u-Du^3]+u^2=0.
\end{equation}\end{remark}

In the following, the letters $C$, $C_i$ ($i = 1, 2,\cdots$) will
always denote positive constants different in various occurrences.

\section{Proof of Theorem \ref{thm1.1}}\label{sec2}





\begin{lemma}
\label{lem2.1}
If $u(x,t)$ is a solution of the problem (\ref{1-1})-(\ref{1-3}), then $u(x,t)$ satisfies
\begin{equation}
\label{2-6}
\|u\|_{L^{\infty}(Q_T)}\leq C,
~~
\sup_{t\in[0,T]}\|Du(\cdot,t)\|\leq C,
~~
\int_0^T\int_{\Omega}|D^3u(\cdot,t)|^2dxdt\leq C,
\end{equation}
\end{lemma}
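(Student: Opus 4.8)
The plan is to derive the three estimates in \eqref{2-6} from a hierarchy of energy identities, exploiting the structural assumptions $a(u)\ge M_1>1$ and $a'(u)u\ge 0$ together with the inverse operator $(-D^2)^{-1}$ to absorb the troublesome zeroth-order term $u^2$. First I would record the conservation-type identity: integrating \eqref{1-1} over $\Omega$ and using the boundary conditions \eqref{1-2} kills the $D^2[\cdots]$ term, leaving $\frac{d}{dt}\langle u\rangle + \int_\Omega u^2\,dx = 0$. Since $\int_\Omega u^2\,dx\ge \langle u\rangle^2$ (Cauchy--Schwarz on $\Omega=(0,1)$), this is a Riccati-type differential inequality for $\langle u\rangle$ that keeps $\langle u\rangle$ bounded on $[0,T]$; it also yields $\int_0^T\!\int_\Omega u^2\,dx\,dt\le C$ in passing. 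This controls the mean, which is what is needed to use Poincaré-type inequalities for functions with prescribed average.

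Next I would test \eqref{1-1} against $(-D^2)^{-1}(u-\langle u\rangle)$, which is legitimate because the boundary conditions \eqref{1-2} make $u-\langle u\rangle$ lie in the range of $-D^2$ with homogeneous Dirichlet data, and $(-D^2)^{-1}$ is positive and self-adjoint. Writing $v=(-D^2)^{-1}(u-\langle u\rangle)$, the time-derivative term becomes $\frac12\frac{d}{dt}\|Dv\|^2 = \frac12\frac{d}{dt}\|u-\langle u\rangle\|_{H^{-1}}^2$; the principal term, after integrating by parts, produces $\int_\Omega a(u)(Du)^2\,dx$ plus lower-order pieces coming from $-u^3$ (namely $-\int_\Omega a'(u)\,\text{stuff}$ and $+3\int_\Omega u^2(Du)\cdot(\cdots)$ — these are handled by Young's inequality and the $L^\infty$ bound to be established); the $u^2$ term becomes $\int_\Omega u^2\, v\,dx$, which is controlled since $v$ is bounded in $H^1_0\hookrightarrow L^\infty$ by $\|u-\langle u\rangle\|_{H^{-1}}$. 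Using $a(u)\ge M_1$ on the good term and Gronwall gives $\sup_t\|u-\langle u\rangle\|_{H^{-1}}\le C$ and $\int_0^T\|Du\|^2\,dt\le C$. The sign condition $a'(u)u\ge0$ is what lets me dispose of the cross term generated when the $a(u)D^2u$ part meets $D^2$ of the $-u^3$ or the test function — this is the point where the hypotheses are genuinely used rather than just convenient.

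With $\int_0^T\|Du\|^2\,dt$ and the mean bound in hand, Gagliardo--Nirenberg / Agmon in one dimension upgrades this to an $L^2$-in-time $L^\infty$-in-space bound, but to get the full $L^\infty(Q_T)$ and the $\sup_t\|Du\|$ estimates I would test \eqref{1-1} (in the form \eqref{a-1}) against $D^4u$ — equivalently multiply by $-D^2(D^2u)$ and integrate, using \eqref{1-2} so all boundary terms vanish. This produces $\frac12\frac{d}{dt}\|D^2u\|^2 + \int_\Omega a(u)(D^3u)^2\,dx$ on the good side, against terms of the form $\int a'(u)Du\,D^2u\,D^3u$, $\int a''(u)(Du)^2 D^2u\cdot(\cdots)$, and the contributions of $-Du^3$ and $u^2$; each is estimated by Young's inequality, absorbing $\varepsilon\|D^3u\|^2$ into the coercive term (here $M_1>1$ gives room) and controlling the remaining factors by the already-established $\sup_t\|Du\|$, $\|u\|_{L^\infty}$, and interpolation $\|D^2u\|\le C\|D^3u\|^{1/2}\|Du\|^{1/2}$. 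Gronwall then yields $\sup_t\|D^2u\|\le C$ and $\int_0^T\|D^3u\|^2\,dt\le C$; Sobolev embedding in one dimension finishes $\|u\|_{L^\infty(Q_T)}\le C$ and $\sup_t\|Du\|\le C$. The main obstacle throughout is the nonlinear principal part: every integration by parts on the $a(u)D^2u$ term spawns derivatives of $a(u)$ multiplied by high derivatives of $u$, and keeping all of these subordinate to the single coercive term $\int a(u)(D^3u)^2$ requires careful bookkeeping with the $C^{2+\alpha}$ regularity of $a$, the $L^\infty$ bound on $u$, and one-dimensional interpolation — the sign condition $a'(u)u\ge0$ is the crucial extra leverage that makes the worst cross term non-positive rather than merely small.
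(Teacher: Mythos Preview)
Your overall plan --- energy hierarchy, $(-D^2)^{-1}$ to tame the $u^2$ term, and the sign condition $a'(u)u\ge0$ --- matches the paper's in spirit, but the execution has a genuine gap at the top level. The final step is both mis-specified and circular: testing with $D^4u$ would yield the coercive piece $\int a(u)|D^4u|^2$ together with $\tfrac12\tfrac{d}{dt}\|D^2u\|^2$, whereas the term $\int a(u)|D^3u|^2$ you write down is what comes from testing with $-D^2u$ (which is what the paper actually does, and which already suffices for the lemma). More seriously, you propose to control the resulting cross terms ``by the already-established $\sup_t\|Du\|$, $\|u\|_{L^\infty}$'', but those are precisely the quantities this step is supposed to deliver.

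What your outline is missing is the paper's intermediate layer. After the $Nu$-test the paper (i) also tests with $u$ and combines the two via Gronwall to obtain $\sup_t\|u\|$ together with $\int_0^T\!\int a(u)(|Du|^2+|D^2u|^2)$; then (ii) tests with $Nu_t$ to extract $\sup_t\|u\|_{L^4}$ and, crucially, $\int_0^T\|u_t\|_{H^{-1}}^2$. Step~(ii) is the crux: rewriting $u_t$ through the equation converts the time-integrated $H^{-1}$ bound into a bound on $\int_0^T\|D(a(u)D^2u)\|^2$, hence on $\int_0^T\|a'(u)\,Du\,D^2u\|^2$. That is exactly what allows the dangerous cross term $\int a'(u)\,Du\,D^2u\,D^3u$ generated by the $-D^2u$ test to be estimated by $\bigl(\tfrac{1}{M_1}+\varepsilon\bigr)\int a(u)|D^3u|^2$ and absorbed --- and this is where the hypothesis $M_1>1$ is genuinely used, not merely to ``give room'' in a Young inequality. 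Your route jumps from the $H^{-1}$ level directly to the top; without the time-integrated $|D^2u|^2$ and $\|u_t\|_{H^{-1}}^2$ bounds there is no handle on $a'(u)\,Du\,D^2u$ in $L^2(Q_T)$, and Young plus one-dimensional interpolation alone do not close the estimate.
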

\begin{proof}  Set $Nu=(-D^2)^{-1}u$ and $Nu_t=(-D^2)^{-1}u_t$. Multiplying both sides of the Eq.(\ref{1-1}) by $Nu$ and integrating over
$\Omega$, one gets
\begin{equation}
\frac12\frac d{dt}\|u\|_{H^{-1}(\Omega)}^2+\|u\|_{L^4(\Omega)}^4+\int_{\Omega}[a(u)+a'(u)u]|Du|^2dx=(u^2,Nu).
\label{2-8}\end{equation}
Note that
$$
(u^2,Nu)\leq\|u\|_{L^4(\Omega)}^2\|Nu\|\leq C\|u\|_{L^4(\Omega)}^2\|u\|\leq\frac14\|u\|_{L^4(\Omega)}^4+C\|u\|^2.
$$
 Summing up, we get
\begin{equation}
\label{2-9}
\frac12\frac d{dt}\|u\|_{H^{-1}(\Omega)}^2+\frac34\|u\|_{L^4(\Omega)}^4+\int_{\Omega}a(u)|D u|^2dx\leq C_1\|u\|^2.
\end{equation}
Multiplying both sides of the Eq.(\ref{1-1}) by $u$ and integrating over
$\Omega$, one gets
\begin{equation}
\label{2-10}
\frac12\frac d{dt}\|u\|^2+\int_{\Omega}a(u)|D^2 u|^2dx+3\int_{\Omega}u^2|D u|^2dx=-(u^2,u)\leq\frac14\|u\|_{L^4(\Omega)}^4+\|u\|^2.
\end{equation}
Combining (\ref{2-9}) and (\ref{2-10}) together gives
\begin{equation}
\label{2-11}\begin{aligned}&
\frac12\frac d{dt}(\|u\|_{H^{-1}(\Omega)}^2+\|u\|^2)+\int_{\Omega}a(u)(|D u|^2+|D^2 u|^2)dx+\frac12\|u\|_{L^4(\Omega)}^4\\&+3\|uD u\|^2
\leq C_2\|u\|^2
\leq C_2(\|u\|_{H^{-1}(\Omega)}^2+\|u\|^2),\end{aligned}
\end{equation}
which yields, owing to  Gronwall's inequality,
\begin{equation}
\label{2-12}
\|u\|_{H^{-1}(\Omega)}^2+\|u\|^2\leq e^{C_2T}(\|u_0\|_{H^{-1}(\Omega)}^2+\|u_0\|^2),\quad t\in[0,T].
\end{equation}
Integrating (\ref{2-11}) over $(0,T)$, we derive that
\begin{equation}
\label{2-13}
\int_0^T\int_{\Omega}a(u)(|D u|^2+|D^2u|^2)dxdt+\int_0^T\|u\|_{L^4(\Omega)}^4dt\leq C.
\end{equation}
Multiplying both sides of the Eq.(\ref{1-1}) by $Nu_t$ and integrating over $\Omega$, one gets
\begin{equation}\label{2-14}
\|u_t\|^2_{H^{-1}(\Omega)}-\int_{\Omega}a(u)u_tD^2 udx+\frac14\frac d{dt}\|u\|_{L^4(\Omega)}^4=(u^2,Nu_t),
\end{equation}
which means
\begin{equation}
\label{2-15}\begin{aligned}&
\|u_t\|_{H^{-1}(\Omega)}^2+\frac14\frac d{dt}\|u\|_{L^4(\Omega)}^4\\\leq& \varepsilon\|Nu_t\|^2+C_{\varepsilon}\|u\|_{L^4(\Omega)}^4+\int_{\Omega}a(u)|D^2 u|^2dx+\int_{\Omega}a(u)u_tdx.
\end{aligned}\end{equation}
Let $A(u)=\int_0^ua(s)ds$. Hence
\begin{equation}
\label{2-16}\begin{aligned}&
\|u_t\|_{H^{-1}(\Omega)}^2+\frac d{dt}\left(\frac14\|u\|_{L^4(\Omega)}^4-\int_{\Omega}A(u)dx\right)\\\leq& C_{\varepsilon}\|u\|_{L^4(\Omega)}^4+C_3\varepsilon\|u_t\|^2_{H^{-1}(\Omega)}+\int_{\Omega}a(u)|D^2 u|^2dx,
\end{aligned}
\end{equation}
where $\varepsilon$ is small enough, it satisfies $1-C_3\varepsilon>0$. Integrating (\ref{2-16})  over $(0,T)$, we deduce that
\begin{equation}
\label{2-17}\begin{aligned}&
\frac14\|u\|_{L^4(\Omega)}^4-\int_{\Omega}A(u)dx+C\int_0^T\|u_t\|_{H^{-1}(\Omega)}^2dt\\\leq& C_{\varepsilon}\int_0^T\|u\|_{L^4(\Omega)}^4dt+\int_0^T\int_{\Omega}a(u)|D^2 u|^2dx+\frac14\|u_0\|_{L^4(\Omega)}^4-\int_{\Omega}A(u_0)dx.\end{aligned}
\end{equation}
By (\ref{2-13}), we obtain
\begin{equation}\begin{aligned}
\label{2-18}\frac14\|u\|_{L^4(\Omega)}^4+C\int_0^T\|u_t\|_{H^{-1}(\Omega)}^2dt\leq &C+\int_{\Omega}\left[\int_0^ua(v)dv\right]dx\\\leq& C+\sup|a(u)|\int_{\Omega}|u|dx\leq C.\end{aligned}
\end{equation}
It then follows from (\ref{2-18}) that
\begin{equation}
\label{2-19}
\int_0^T\|u_t\|_{H^{-1}(\Omega)}^2dt=\int_0^T\|D(a(u)D^2u)-f'(u)D u+D (Ng(u))\|^2dt\leq C.
\end{equation}
Note that
\begin{equation}\begin{aligned}\label{2-20}
\|D(a(u)D^2 u)\|^2=&\|a(u)D^3 u+a'(u)D uD^2 u\|^2\\\leq& C(1+\|f'(u)D u\|^2+\|DNg(u)\|^2)
\\
\leq&C\left(1+3\int_{\Omega}u^4|Du|^2dx+\int_{\Omega}u^4dx\right).
\end{aligned}\end{equation}
Using Nirenberg's inequality, we derive that
\begin{equation}\label{2-21}
\|u\|_{L^8(\Omega)}\leq C(\|D^3 u\|^{\frac1{8}}\|u\|^{\frac78}+\|u\|),
\end{equation}
and
\begin{equation}\label{2-22}
\|Du\|_{L^4(\Omega)}\leq C(\|D^3u\|^{\frac5{12}}\|u\|^{\frac7{12}}+\|u\|).
\end{equation}
Combining (\ref{2-20})-(\ref{2-22}) together gives
\begin{equation}
\label{2-22-1}
\|D(a(u)D^2 u)\|^2\leq \varepsilon\|D^3u\|^2+C.
\end{equation}
Follows from (\ref{2-20}) and (\ref{2-22-1}), we obtain
\begin{equation}\label{2-22-2}\begin{aligned}
 \|a'(u)D uD^2u\|^2dt\leq & (\|D(a(u)D^2 u)\|^2+\|a(u)D^3 u\|^2)\\\leq& \varepsilon\|D^3u\|^2+C+\|a(u)D^3 u\|^2.\end{aligned}
\end{equation}
By (\ref{2-22-1}), we deduce that
\begin{equation}\label{2-23}\begin{aligned}
\int_0^T\|D(a(u)D^2 u)\|^2dt \leq& C+\varepsilon\int_0^T\|D^3 u\|^2dt.
\end{aligned}
\end{equation}
Thus
\begin{equation}
\begin{aligned}&
 \int_0^T\int_{\Omega}a'(u)|DuD^2 uD^3u|dxdt\\\leq&\frac1{M_1}\int_0^T\int_{\Omega}a(u)a'(u)|DuD^2 uD^3 u|dxdt
\\
\leq&\frac1{2M_1 }\left(\int_0^T\|a(u)D^3u\|^2dt+ \int_0^T\|a'(u)DuD^2u\|^2dt\right)
\\ \leq&
 \frac1{M_1 }\int_0^T\|a(u)D^3u\|^2dt+\frac{\varepsilon}{2M_1}\int_0^T\|D^3u\|^2dt+C
 \\ \leq&
 \left(\frac1{M_1 }+\frac{\varepsilon}{2M_1^3}\right)\int_0^T\|a(u)D^3u\|^2dt +C.
 \label{2-24}
\end{aligned}
\end{equation}
Multiplying both sides of  Eq.(\ref{1-1}) by $-D^2 u$ and integrating over $\Omega$, one gets
\begin{equation}\label{2-27}\begin{aligned}&
\frac12\frac d{dt}\|Du\|^2+\int_{\Omega}a(u)|D^3 u|^2dx\\=&-\int_{\Omega}a'(u)D uD^2 uD^3udx-\int_{\Omega}D^2 u^3Dudx-\int_{\Omega}2u|Du|^2dx.
\end{aligned}\end{equation}
By (\ref{2-21}), (\ref{2-22}), (\ref{2-24}) and (\ref{2-27}), we have
\begin{equation}\begin{aligned}&
\label{2-28}
\frac 12\frac d{dt}\|D u\|^2+\int_{\Omega}a(u)|D^3 u|^2dx\\\leq& \int_{\Omega}a'(u)|D uD^2 uD^3 u|dx+3\int_{\Omega}u^2D uD^3 udx-\int_{\Omega}2u|D u|^2dx
\\
\leq& \int_{\Omega}a'(u)|D uD^2 uD^3 u|dx+3\|D u\|_{L^4(\Omega)}\|u\|_{L^8(\Omega)}^2\|D^3 u\|+2\|u\|\|Du\|_{L^4(\Omega)}^2
\\
\leq&\left(\frac1{M_1 }+\frac{\varepsilon}{2M_1^3}\right)\int_0^T\|a(u)D^3u\|^2dt+\varepsilon\|D^3u\|^2+C\\
\leq&\left(\frac1{M_1 }+\frac{\varepsilon}{2M_1^3}+\varepsilon M_2^2\right)\int_0^T\|a(u)D^3u\|^2dt +C.
\end{aligned}
\end{equation}Note that $M_1>1$. Choosing $\varepsilon$ sufficiently small,
integrating (\ref{2-28}) over $(0,T)$, using (\ref{2-24}), we derive that
\begin{equation}
\label{2-29}
\frac d{dt}\|D u\|^2+\int_{\Omega}a(u)|D^3 u|^2dx\leq C.
\end{equation}
Hence
\begin{equation}
\label{2-30}
\|D u\|^2+\int_0^T\int_{\Omega}a(u)|D^3 u|^2dxdt\leq C.
\end{equation}

\end{proof}

\begin{lemma}
\label{lem2.5}Suppose that $\langle u_0\rangle=\int_{\Omega}u_0dx$ is bounded, $1<M_1\leq a(u)\leq M_2$, $a'(u)>0$ and $a'(u)u\geq0$, then for the solution $u(x,t)$ of problem (\ref{1-1})-(\ref{1-3}), we have
\begin{equation}
\label{2-31}
|u(x_1,t)-u(x_2,t)|\leq C|x_1-x_2|^{\frac12},\quad \forall t\in[0,T],x_1,x_2\in[0,1],
\end{equation}and
\begin{equation}
\label{2-32}
|u(x,t_1)-u(x,t_2)|\leq C|t_1-t_2|^{\frac18},\quad\forall x\in[0,1],t_1,t_2\in[0,T].
\end{equation}
\end{lemma}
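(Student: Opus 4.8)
The plan is to obtain the spatial Hölder bound (\ref{2-31}) directly from the uniform control of $Du$ in $L^\infty(0,T;L^2(\Omega))$ furnished by Lemma \ref{lem2.1}, and to obtain the temporal Hölder bound (\ref{2-32}) by interpolating a cheap $H^{-1}$-in-time modulus of continuity against the uniform $H^1$-in-space bound; both ingredients are already available from Lemma \ref{lem2.1} and its proof, so the argument is essentially an interpolation bookkeeping.

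\emph{Spatial estimate.} Fix $t\in[0,T]$. Since $u(\cdot,t)\in H^1_0(\Omega)$, for $x_1<x_2$ in $[0,1]$ we have $u(x_2,t)-u(x_1,t)=\int_{x_1}^{x_2}Du(s,t)\,ds$, so by the Cauchy--Schwarz inequality and the second bound in (\ref{2-6}),
\[
|u(x_1,t)-u(x_2,t)|\le|x_1-x_2|^{1/2}\,\|Du(\cdot,t)\|_{L^2(\Omega)}\le C|x_1-x_2|^{1/2}.
\]

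\emph{Temporal estimate.} I would proceed in three steps. First, the bound $\int_0^T\|u_t\|_{H^{-1}(\Omega)}^2\,dt\le C$ obtained in (\ref{2-19}), together with $u(\cdot,t_1)-u(\cdot,t_2)=\int_{t_2}^{t_1}u_t(\cdot,s)\,ds$ in $H^{-1}(\Omega)$ and Cauchy--Schwarz in $s$, yields
\[
\|u(\cdot,t_1)-u(\cdot,t_2)\|_{H^{-1}(\Omega)}\le|t_1-t_2|^{1/2}\Big(\int_0^T\|u_t\|_{H^{-1}(\Omega)}^2\,dt\Big)^{1/2}\le C|t_1-t_2|^{1/2}.
\]
Second, I would record the one-dimensional interpolation inequality: for $v\in H^1_0(\Omega)$ one has $\|v\|_{L^\infty(\Omega)}^2\le 2\|v\|_{L^2(\Omega)}\|Dv\|_{L^2(\Omega)}$ and, by $L^2$-duality together with $\|v\|_{H^1_0(\Omega)}\le C\|Dv\|_{L^2(\Omega)}$, $\|v\|_{L^2(\Omega)}^2\le C\|v\|_{H^{-1}(\Omega)}\|Dv\|_{L^2(\Omega)}$; chaining these gives
\[
\|v\|_{L^\infty(\Omega)}\le C\,\|Dv\|_{L^2(\Omega)}^{3/4}\,\|v\|_{H^{-1}(\Omega)}^{1/4}.
\]
Third, I would apply this to $v=u(\cdot,t_1)-u(\cdot,t_2)\in H^1_0(\Omega)$: by (\ref{2-6}) (the $L^\infty$ bound on $u$ and $\sup_t\|Du(\cdot,t)\|\le C$) we have $\|Dv\|_{L^2(\Omega)}\le C$, while the first step gives $\|v\|_{H^{-1}(\Omega)}\le C|t_1-t_2|^{1/2}$, whence
\[
|u(x,t_1)-u(x,t_2)|\le\|u(\cdot,t_1)-u(\cdot,t_2)\|_{L^\infty(\Omega)}\le C\big(|t_1-t_2|^{1/2}\big)^{1/4}=C|t_1-t_2|^{1/8}.
\]

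The two integral estimates are routine applications of Cauchy--Schwarz; the only points needing care are the interpolation inequality on $(0,1)$ and the justification that $u$ is the $H^{-1}(\Omega)$-valued primitive of $u_t$, so that the uniform bound (\ref{2-19}) genuinely controls the $H^{-1}$-modulus of continuity of $u$ in time. The exponent $\tfrac18$ is then forced by the product $\tfrac12\cdot\tfrac14$ arising from this chain, so no sharper regularity input is required.
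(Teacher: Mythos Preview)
Your spatial estimate is identical to the paper's. For the temporal estimate, however, you take a genuinely different route from the paper, and your argument is correct.

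The paper proceeds by the classical space--time box technique: it integrates the equation, written in divergence form as in (\ref{a-1}), over $(y,y+(\Delta t)^{1/4})\times(t_1,t_2)$ and then again in $y$ over $(x,x+(\Delta t)^{1/4})$; the flux terms are controlled by the $L^2(Q_T)$ bounds on $u$, $Du$, $D^3u$ from Lemma~\ref{lem2.1}, yielding a bound $C(\Delta t)^{5/8}$ on a double spatial average of $u(\cdot,t_2)-u(\cdot,t_1)$. A mean--value argument then produces a single point $x^*$ with $|u(x^*,t_1)-u(x^*,t_2)|\le C|t_1-t_2|^{1/8}$, and the spatial H\"older bound transfers this to all $x$. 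Your approach instead exploits the bound $\int_0^T\|u_t\|_{H^{-1}}^2\,dt\le C$ from (\ref{2-19}) to get $C^{1/2}$ continuity of $t\mapsto u(\cdot,t)$ into $H^{-1}$, and then interpolates $\|v\|_{L^\infty}\le C\|Dv\|^{3/4}\|v\|_{H^{-1}}^{1/4}$ against the uniform $H^1$ bound. Both arguments rest on the same a~priori information and land on the same exponent $\tfrac18$; yours is shorter and more functional--analytic, reusing (\ref{2-19}) directly, while the paper's box--integration method is more self--contained (it does not invoke the $H^{-1}$ machinery or the estimate (\ref{2-19})) and is the standard template one would use for equations where an $H^{-1}$ energy identity is unavailable.
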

\begin{proof}
The inequality (\ref{2-31}) can be obtained by the inequality (\ref{2-30}) directly.  Integrating Eq.(\ref{1-1}) with respect to $(x,t)$ over $(y,y+(\Delta t)^{\frac14})\times(t_1,t_2)$, we derive that
\begin{equation}
\begin{aligned}&
(\Delta t)^{\frac14}\int_{\Omega}(u(y+\theta(\Delta t)^{\frac14},t_2)-u(y+\theta(\Delta t)^{\frac14},t_1))d\theta
\\
=&-\int_{t_1}^{t_2}a(u(y+(\Delta t)^{\frac14},t)D^3u(y+(\Delta t)^{\frac14},t)dt+\int_{t_1}^{t_2}a(u(y,t))D^3u(y,t)dt
\\
&-\int_{t_1}^{t_2}a'(u(y+(\Delta t)^{\frac14},t))Du(y+(\Delta t)^{\frac14},t)D^2u(y+(\Delta t)^{\frac14},t)dt\\&-\int_{t_1}^{t_2}a(u(y,t))D[u(y,t)]^3dt
-\int_{t_1}^{t_2}DN[u(y+(\Delta t)^{\frac14},t)]^2dt\\&+\int_{t_1}^{t_2}DN[u(y,t)]^2dt+\int_{t_1}^{t_2}a'(u(u,t))Du(y,t)D^2u(y,t)dt\\&
+\int_{t_1}^{t_2}a(u(y+(\Delta t)^{\frac14},t))D[u(y+(\Delta t)^{\frac14},t)]^3dt
.
\end{aligned}\label{2-33}\end{equation}
Integrating (\ref{2-33}) with respect to $y$ over $(x,x+(\Delta t)^{\frac14})$, by H\"{o}lder's inequality and Sobolev's embedding theorem, we derive that
\begin{equation}
\begin{aligned}
&(\Delta t)^{\frac14}\int_x^{x+(\Delta t)^{\frac14}}\int_0^1[u(y+\theta(\Delta t)^{\frac14},t_2)-u(y+\theta(\Delta t)^{\frac14},t_1)]d\theta dy
\\
\leq&C\int_x^{x+(\Delta t)^{\frac14}}\int_0^1|Du(y+(\Delta t)^{\frac14},t)|dtdy+C\int_x^{x+(\Delta t)^{\frac14}}\int_0^1|Du(y,t)|dtdy
\\&+C\int_x^{x+(\Delta t)^{\frac14}}\int_0^1
|D^3u(y+(\Delta t)^{\frac14},t)|dtdy+C\int_x^{x+(\Delta t)^{\frac14}}\int_0^1|D^3u(y,t)|dtdy\\&+C\int_x^{x+(\Delta t)^{\frac14}}\int_0^1u(y+(\Delta t)^{\frac14},t)]dtdy+C\int_x^{x+(\Delta t)^{\frac14}}\int_0^1u(y,t)dtdy
\\
\leq&C(\Delta t)^{\frac58}\left[\left(\int_0^T\int_{\Omega}|Du(y,t)|^2dtdy\right)^{\frac12}+\left(\int_0^T\int_{\Omega}|D^3u(y,t)|^2dtdy\right)^{\frac12}\right.
\\&\left.+\left(\int_0^T\int_{\Omega}|u(y,t)|^2dtdy\right)^{\frac12}\right]
\leq C(\Delta t)^{\frac58}.
\end{aligned}\label{2-34}\end{equation}
By mean value theorem, we know that there is a point $x^*=y^*+\theta^*(\Delta t)^{\frac14}$, $y^*\in(x,x+(\Delta t)^{\frac14})$, $\theta^*\in(0,1)$, such that
$$
|u(x^*,t_1)-u(x^*,t_2)|\leq C|t_1-t_2|^{\frac18}.
$$
It then follows from (\ref{2-31}) and the above inequality that
\begin{equation}\begin{aligned}&
|u(x,t_1)-u(x,t_2)|\\\leq &|u(x,t_1)-u(x^*,t_1)|+|u(x^*,t_1)-u(x^*,t_2)|+|u(x,t_2)-u(x^*,t_2)|
\\
\leq& C|t_1-t_2|^{\frac18},
\end{aligned}\nonumber\end{equation}
holds for all $x\in[0,1]$ and $t_1,t_2\in[0,T]$. Then, the proof is complete.

\end{proof}

\begin{lemma}[See Liu\cite{Yin2}]Suppose that $\sup|f|<\infty$, $\tilde{a}(x,t)\in C^{\frac12,\frac18}(\bar{Q}_T)$, and there exist two constants $a_0$ and $A_0$ such that $0<a_0\leq \tilde{a}(x,t)\leq A_0$ for all $(x,t)\in Q_T:=(0,1)\times(0,T)$. If $u$ is a smooth solution of the following linear problem
\begin{equation}
\begin{aligned}
&\partial_tu+D[\tilde{a}(x,t)D^3u]=Df,\quad (x,t)\in Q_T,
\\
&u|_{x=0,1}=D^2u|_{x=0,1}=0,
\quad u(x,0)=0,
\end{aligned}\nonumber
\end{equation}
then, for any $\delta\in(0,\frac12)$, there is a constant $C$ depending on $a_0$, $A_0$, $\delta$, $T$ and $\|\tilde{a}\|_{C^{\frac12,\frac18}(\bar{Q}_T)}$, such that
$$
|u(x_1,t_2)-u(x_2,t_2)|\leq C(1+\sup|f|)(|x_1-x_2|^{\delta}+|t_1-t_2|^{\frac{\delta}4}).
$$
\label{lem2.6}

\end{lemma}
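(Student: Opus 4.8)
This is a linear statement, and (the attribution to Liu \cite{Yin2} notwithstanding) the plan I would follow is a direct energy argument in which the coefficient $\tilde a$ is never differentiated; the homogeneous boundary conditions $u|_{x=0,1}=D^2u|_{x=0,1}=0$ are precisely what makes this possible. In outline: first establish the a priori bound
$$
\sup_{t\in[0,T]}\|Du(\cdot,t)\|+\Big(\int_0^T\!\!\int_\Omega|D^3u|^2\,dx\,dt\Big)^{1/2}\le C\big(1+\sup|f|\big),\qquad C=C(a_0,A_0,T);
$$
then read off the spatial Hölder bound from it; then obtain the time regularity by integrating the equation over parabolic cylinders matched to the fourth-order scaling; and finally combine the two estimates.

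For the a priori bound, multiply $\partial_tu+D(\tilde aD^3u)=Df$ by $-D^2u$ and integrate over $\Omega$. Since $u$ vanishes at $x=0,1$ for every $t$, so does $\partial_tu$, and hence $-\int_\Omega\partial_tu\,D^2u\,dx=\tfrac12\tfrac{d}{dt}\|Du\|^2$. Since $D^2u$ vanishes at $x=0,1$, the boundary terms produced by the integrations by parts in $-\int_\Omega D(\tilde aD^3u)D^2u\,dx=\int_\Omega\tilde a|D^3u|^2\,dx$ and $-\int_\Omega Df\,D^2u\,dx=\int_\Omega fD^3u\,dx$ all vanish; in particular $\tilde a$ is never differentiated, so $\tilde a\in L^\infty$ with $\tilde a\ge a_0>0$ is all that is used here. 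Thus $\tfrac12\tfrac{d}{dt}\|Du\|^2+a_0\|D^3u\|^2\le\int_\Omega fD^3u\,dx\le\tfrac{a_0}{2}\|D^3u\|^2+\tfrac1{2a_0}\sup|f|^2$, and integrating in time with $Du(\cdot,0)=0$ yields the displayed bound. From $Du\in L^\infty(0,T;L^2(\Omega))$ together with $u(0,t)=0$ we get, for every $t$,
$$
|u(x_1,t)-u(x_2,t)|=\Big|\int_{x_2}^{x_1}Du(\cdot,t)\,dx\Big|\le\|Du(\cdot,t)\|\,|x_1-x_2|^{1/2}\le C\big(1+\sup|f|\big)|x_1-x_2|^{1/2},
$$
which is the spatial part of the assertion (in fact with exponent $\tfrac12$).

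For the time regularity I would reproduce the device used in the proof of Lemma \ref{lem2.5}: integrate the equation over $(y,y+(\Delta t)^{1/4})\times(t_1,t_2)$, with $\Delta t=|t_1-t_2|$, and then integrate again in $y$ over an interval of length $(\Delta t)^{1/4}$. On the right-hand side the term coming from $\tilde aD^3u$ is estimated by the Cauchy–Schwarz inequality using the $L^2(Q_T)$-bound on $D^3u$, and the term coming from $f$ by $\sup|f|$; after accounting for the volume factors this shows that the average of $u(\cdot,t_2)-u(\cdot,t_1)$ over a spatial interval of length $(\Delta t)^{1/4}$ is $\le C(1+\sup|f|)(\Delta t)^{1/8}$. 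A mean-value argument then supplies a point $x^\ast$ within distance $C(\Delta t)^{1/4}$ of $x$ at which $|u(x^\ast,t_1)-u(x^\ast,t_2)|\le C(1+\sup|f|)(\Delta t)^{1/8}$, and combining this with the spatial estimate over the distance $|x-x^\ast|$ gives $|u(x,t_1)-u(x,t_2)|\le C(1+\sup|f|)(\Delta t)^{1/8}$ for all $x$. Together with the spatial bound this yields the stated $C^{\delta,\delta/4}$ estimate for every $\delta\le\tfrac12$; the restriction $\delta<\tfrac12$ in the statement is the (slightly lossy) form that comes out of the alternative Campanato-space proof in \cite{Yin2}, which is also where the dependence on $\|\tilde a\|_{C^{1/2,1/8}(\bar Q_T)}$ would enter. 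The only genuinely delicate point in the argument is the bookkeeping of the fourth-order parabolic scaling in this last step — choosing the side $(\Delta t)^{1/4}$ of the cylinder so that the $D^3u$ contribution, controlled only in $L^2(Q_T)$, and the $f$ contribution balance; by contrast the a priori estimate is essentially forced once one notices that the multiplier $-D^2u$ keeps every derivative off $\tilde a$.
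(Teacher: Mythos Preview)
The paper does not prove this lemma; it is quoted from Liu \cite{Yin2} without argument, so there is no in-paper proof to compare against. Your energy approach is correct and in fact yields more than is stated: the multiplier $-D^2u$ produces $\sup_t\|Du\|^2+a_0\|D^3u\|_{L^2(Q_T)}^2\le C(a_0,T)(1+\sup|f|)^2$ using only the pointwise bounds $a_0\le\tilde a\le A_0$ (no derivative ever lands on $\tilde a$), whence the spatial $\tfrac12$-H\"older estimate is immediate; the parabolic-cylinder averaging you describe---exactly the device of Lemma~\ref{lem2.5}---then gives the $\tfrac18$-H\"older bound in time. You thus reach the endpoint $\delta=\tfrac12$ with a constant independent of $\|\tilde a\|_{C^{1/2,1/8}}$. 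The form of the cited lemma (open range $\delta<\tfrac12$, dependence on the H\"older norm of $\tilde a$) indicates that \cite{Yin2} proceeds via the Campanato freezing-of-coefficients technique: local comparison with the constant-coefficient problem $\partial_tv+\tilde a(x_0,t_0)D^4v=0$, control of the remainder through the modulus of continuity of $\tilde a$, and iteration of the resulting oscillation decay. That route is more flexible---it works in higher dimension, for non-divergence equations, and bootstraps to higher regularity---but in this one-dimensional divergence-form setting with the Navier conditions $u=D^2u=0$ your direct argument is both shorter and sharper.
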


\begin{proof}
[Proof of Theorem \ref{thm1.1}]Based on Lemma \ref{lem2.6}, we obtain the H\"{o}lder estimate for $Du$:
$$
|Du(x_1,t_1)-Du(x_2,t_2)|\leq C(|x_1-x_2|^{\frac{\alpha}2}+|t_1-t_2|^{\frac{\alpha}8}).
$$
The conclusion follows immediately from the classical theory, since we can transform Eq.(\ref{1-1}) into the following form:
$$
\partial_tu+A_1(x,t)D^4u+A_2(x,t)D^3u+A_3(x,t)D^2u+A_4(x,t)Du+[u(x,t)]^2=0,
$$
where the H\"{o}lder norms on
\begin{equation}
\begin{aligned}&A_1(x,t)=a(u(x,t)),\quad\quad\quad\quad\quad\quad
A_2(x,t)=2a'(u(x,t))Du(x,t),
\\
&A_3(x,t)=a{''}(u(x,t))|Du(x,t)|^2,
\quad
A_4(x,t)=3[u(x,t)]^2,
\end{aligned}\nonumber\end{equation}
have been estimated in the above discussion. The proof is complete.

\end{proof}
\subsection*{Acknowledgment}
Dr. Zhao was supported by NSFC (grant
No. 11401258), NSF of Jiangsu Province (grant
No. BK20140130) and China Postdoctoral Science Foundation (grant No. 2015M581689
), Dr. Duan was supported by NSF of Jiangsu Province (grant No. 	BK20170172)  and China Postdoctoral Science Foundation (grant No. 2017M611684
).
Part of this work was done when Dr. Zhao was visiting the Institute of Mathematics for Industry of Kyushu University. He appreciate the hospitality of Prof. Fukumoto, MS. Sasaguri and IMI.

\end{document}